\newcounter{logcounter}[section]
\newcounter{logsubcounter}[logcounter]
\theoremstyle{plain}
\theoremstyle{definition}
\newtheorem{Th}{Theorem}
\newtheorem{Lm}{Lemma}
\newtheorem{Def}{Definition}
\newcommand*{\BCf}[2]{{{#1}\choose{#2}}}
\begin{document}

\bibliographystyle{unsrt}

\title{Counting Unlabelled Chord Diagrams of Maximal Genus}

\author{
Evgeniy Krasko\\
\small Saint Petersburg Academic University\\[-0.8ex]
\small\tt krasko.evgeniy@gmail.com
}

\date{{Aug 20, 2017}{}\\
\small Mathematics Subject Classifications: 05C30}

\begin{abstract}
Maximal chord diagrams up to all isomorphisms are enumerated. The enumerating formula is based on a bijection between rooted one-vertex one-face maps on locally orientable surfaces and a certain class of symmetric chord diagrams. This result extends the one of Cori and Marcus regarding maximal chord diagrams enumerated up to rotations.

\bigskip\noindent \textbf{Keywords:} chord diagrams; maps on surfaces; unlabelled enumeration
\end{abstract}

\maketitle

\section*{Introduction}
A {\em chord diagram} is a circle with $2n$ {\em points} on its circumference joined pairwise by {\em chords} (Figure \ref{fig:basic}(a)). From the combinatorial point of view chord diagrams are analogous to one-face maps on orientable surfaces of some genus: any chord diagram defines a way to glue the sides of a $2n$-gon that yields an oriented surface with an one-face map formed by edges and vertices of the polygon embedded into it (Figure \ref{fig:basic}(b)). Each pair should be glued without a twist, as shown by arrows on Figure \ref{fig:basic}(b). Only the relative orientation of the arrows in each pair matters. 

The notion of a {\em dual map} allows to interchange faces and vertices, hence chord diagrams are isomorphic to one-vertex maps too. The {\em genus} of a diagram is the one of the corresponding map.

Chord diagrams and their properties have been studied by many authors. A classical enumerative result states that the number of planar (genus 0) chord diagrams with $n$ chords is equal to the Catalan number $C_n = \frac{1}{n+1}\BCf{2n}{n}$. Walsh and Lehman have proved \cite[(14)]{Walsh_Lehman} that in the other extreme case, when an one-face map with $2g$ edges is required to be of maximal possible genus, $g$, the enumerating formula is also remarkably simple: $\mu_{2g} = \frac{(4g)!}{4^g(2g+1)!}$. The formula of Harer and Zagier \cite[Theorem 2]{HarerZagier} allows to find the number of chord diagrams for a given genus $g$ and a given number of edges $n$.

Cori and Marcus named genus $g$ diagrams with $2g$ chords {\em maximal} and found an enumerating formula for the number of isomorphism classes (often referred to as {\em unlabelled diagrams}) of them \cite{Cori_Marcus}. The notion of isomorphism used in that work is restricted to rotations of the circle on which a diagram is drawn.

It is natural to try to extend this result and enumerate isomorphism classes of maximal chord diagrams under the action of the largest possible group of symmetries, the dihedral group $D_n$. This problem could be rephrased as the problem of enumerating genus $g$ maps which have both one face and one vertex, up to orientation-preserving and orientation-reversing isomorphisms. 

Not so many results regarding map enumeration up to orientation-reversing isomorphisms are known. There exist a formula \cite{Read_dissections} for planar polygon dissections (isomorphic to trees), two different approaches \cite{Wormald} \cite{Liskovets_reductive} to enumerate arbitrary spherical maps, but not much has been done so far to generalize these results to higher genera. In this note we will solve the problem of enumerating maximal chord diagrams of genus $g$ up to all isomorphisms and thus solve the genus-$g$ map enumeration problem for the corresponding class of maps.

\begin{figure}[h]
\centering
\begin{tabular}[t]{cc}
	\begin{subfigure}[b]{0.22\textwidth}
	\centering
    		\includegraphics[scale=1.9]{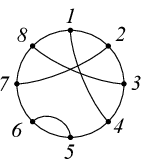}
 	\caption{A chord diagram}
	\end{subfigure}

	\begin{subfigure}[b]{0.22\textwidth}
	\centering
    		\includegraphics[scale=1.9]{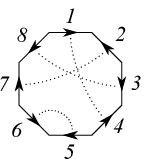}
 	\caption{An octagon}
	\end{subfigure}

	\begin{subfigure}[b]{0.22\textwidth}
	\centering
    		\includegraphics[scale=1.9]{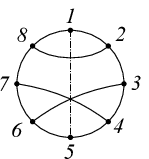}
 	\caption{A type I diagram}
	\end{subfigure}

	\begin{subfigure}[b]{0.22\textwidth}
	\centering
    		\includegraphics[scale=1.9]{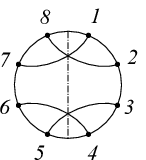}
 	\caption{A type II diagram}
	\end{subfigure}
\end{tabular}
\caption{Chord diagrams and polygon gluings}
\label{fig:basic}
\end{figure}

\section*{Basic facts and definitions}
We begin with recalling the {\em Burnside's lemma} that states that the number $|X/G|$ of isomorphism classes of objects in a set $X$ under the action of a group $G$ is given by
$$
|X/G|  = \frac{1}{|G|} \sum_{g \in G} |\textrm{Fix}_g(X)|,
$$
$\textrm{Fix}_g(X)$ being the subset of elements of $X$ fixed by the action of $g$. It follows that for a given class $\mathcal{D}$ of chord diagrams with $n$ edges, the numbers $d^*_n$ of diagrams counted up to rotations only and $d^{\circ}_n$ of diagrams counted up to all isomorphisms can be expressed as follows:
$$
d^*_n = \frac{1}{2n} \sum_{g \in C_{2n}} |\textrm{Fix}_g(\mathcal{D})|; \quad d^{\circ}_n = \frac{1}{4n} \sum_{g \in D_{2n}} |\textrm{Fix}_g(\mathcal{D})|; \qquad \Rightarrow \qquad d^{\circ}_n = \frac{d^*_n}{2} + \frac{1}{4n} \sum_{g \in D_{2n} \backslash C_{2n}} |\textrm{Fix}_g(\mathcal{D})|.
$$
Here the groups $C_{2n}$ and $D_{2n}$ are the cyclic group of rotations and the dihedral group of all symmetries of a circle with $2n$ evenly distributed points, correspondingly. The elements of $D_{2n} \backslash C_{2n}$ are just reflections of two types: $n$ reflections with respect to an axis that passes through two opposite points, and $n$ reflections with respect to an axis that passes through the centers of two opposite arcs into which the circumference is divided. Since the class $\mathcal{D}$ of maximal diagrams that we will be interested in is closed under rotations, for any two reflections $g$ and $h$ of the same type it is true that $|\textrm{Fix}_g(\mathcal{D})| = |\textrm{Fix}_h(\mathcal{D})|$. Hence for maximal diagrams
\begin{equation}
\label{final}
d^{\circ}_{2g} = \frac{d^*_{2g}}{2} + \frac{d^|_{2g}+d^{||}_{2g}}{4},
\end{equation}
$d^|_{2g}$ being the number of diagrams with the axis of symmetry passing through the points $1$ and $2g+1$ ({\em type I} diagrams, Figure \ref{fig:basic}(c)), $d^{||}_{2g}$ being the number of diagrams with the axis of symmetry passing through the centers of the arcs $(4g) - (1)$ and $(2g) - (2g+1)$ ({\em type II} diagrams, Figure \ref{fig:basic}(d)).
For maximal diagrams an explicit formula for the numbers  $d^*_{2g}$ is given in \cite{Cori_Marcus}. So from now on, we will focus on finding the expressions for the numbers $d^|_{2g}$ and $d^{||}_{2g}$. First we have to introduce a few definitions.
 \begin{Def}
For a given chord diagram, a {\em face-walk} is a cyclic sequence consisting of alternating chord sides and arcs of the circumference obtained by traversing this diagram as shown on Figure \ref{fig:def}(a).
 \end{Def}

If we represent a chord diagram as an one-face map obtained by polygon gluing, each face walk would correspond to a traversal of all semi-edges incident to one vertex, in a cyclic order. In a dual one-vertex map the same face-walk would correspond to a traversal of a single face. It is a routine to check that maximal diagrams are those that contain exactly one face-walk.
 
 \begin{Def}
For a given reflection and a chord diagram, a {\em vertical chord} is a chord that lies on its axis of symmetry. A {\em horizontal chord} is a chord whose ends are interchanged by the reflection.
 \end{Def}

For an one-face-map representation of a reflective-symmetric chord diagram one can use the concept of a {\em quotient map} introduced by Liskovets \cite[Section 4]{Liskovets_reductive}.
 
  \begin{Def}
Let $S$ be a genus $g$ surface together with a map embedded into it, and let $\alpha$ be an automorphism of this embedding (Figure \ref{fig:def}(b)). The corresponding {\em quotient map} is the result of identifying the points of each orbit of the action of $\alpha$ on $S$ (Figure \ref{fig:def}(c)).
 \end{Def}
 
 Note that to obtain the correct direction of gluing depicted on the Figure \ref{fig:def}(c) we should examine each pair of chords that corresponds to gluing a pair of sides of the polygon: if these chords intersect, the sides will be glued with a twist; if not, the sides will be glued in a forward direction. If we actually glue the sides of the polygon depicted on Figure \ref{fig:def}(c) taking the arrow directions into account, we will obtain a map that can be drawn as on Figure \ref{fig:def}(d). If at least one pair of sides is glued with a twist, the embedding surface of the map becomes non-orientable. Liskovets \cite{Liskovets_reductive} shows that the resulting quotient map can be viewed as a certain generalization of a map and notes its peculiarities. Some of these peculiarities can be seen on Figure \ref{fig:def}(d): the surface is a cylinder with a hole capped with a M\"{o}bius band, one of its boundaries contains an edge and a vertex of the original map, while the other contains a vertex only.

\begin{figure}[h]
\centering
\begin{tabular}[t]{cc}
	
	\begin{subfigure}[b]{0.22\textwidth}
	\centering
    		\includegraphics[scale=1.9]{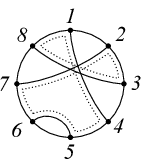}
 	\caption{Face walk}
	\end{subfigure}
	
	\begin{subfigure}[b]{0.22\textwidth}
	\centering
    		\includegraphics[scale=1.9]{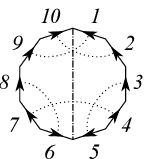}
 	\caption{Symmetric map}
	\end{subfigure}
	
	\begin{subfigure}[b]{0.22\textwidth}
	\centering
    		\includegraphics[scale=1.9]{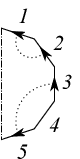}
 	\caption{Quotient map}
	\end{subfigure}

	\begin{subfigure}[b]{0.22\textwidth}
	\centering
    		\includegraphics[scale=1.9]{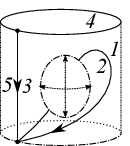}
 	\caption{Map on a surface}
	\end{subfigure}
	
\end{tabular}
\caption{Definitions}
\label{fig:def}
\end{figure}

 \section*{The structure of reflective-symmetric maximal diagrams}
  
It turns out that for maximal chord diagrams we can avoid most of the quotient map peculiarities described in \cite{Liskovets_reductive}. To formalize that, we first need the following auxiliary lemmata.
 
 \begin{Lm} 
 \label{lm:1}
 No type II maximal diagram contains a horizontal chord. 
 \end{Lm}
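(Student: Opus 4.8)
The plan is a proof by contradiction carried out in the permutation model of a chord diagram. Label the $4g$ points cyclically by $\mathbb Z/4g\mathbb Z$ so that the type II reflection becomes the involution $\iota\colon i\mapsto 1-i$ (its two fixed arcs are then exactly those named in the statement), write $\rho\colon i\mapsto i+1$ for the rotation, and let $\sigma$ be the fixed-point-free involution of $\mathbb Z/4g\mathbb Z$ pairing the ends of the chords. First I would record three dictionary entries: (i)~the diagram is type II symmetric iff $\iota\sigma=\sigma\iota$; (ii)~the number of face-walks of the diagram equals the number of cycles of $\varphi:=\sigma\rho$ (the usual vertex count of the one-face map obtained by the polygon gluing), so that ``maximal'' means precisely that $\varphi$ is a single $4g$-cycle; (iii)~the diagram contains a horizontal chord iff the involution $\sigma\iota$ has a fixed point, since $\sigma\iota(i)=i$ is equivalent to $\sigma(i)=1-i=\iota(i)$, i.e.\ to the chord at $i$ joining $i$ to its mirror image. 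Thus Lemma~\ref{lm:1} is equivalent to the assertion: if $\iota\sigma=\sigma\iota$ and $\sigma\rho$ is a single $4g$-cycle, then $\sigma\iota$ is fixed-point-free.

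The heart of the argument is then an elementary fact about dihedral actions on a cyclic set. Put $\iota':=\rho^{-1}\iota$; then $\iota'(i)=-i$, so $\iota'$ is again a reflection, it fixes the point $0$, and $\sigma\iota=\sigma\rho\,\iota'=\varphi\iota'$. Using $\iota\sigma=\sigma\iota$ (equivalently $\iota\sigma\iota=\sigma$) one computes $\iota'\varphi\iota'=\rho^{-1}\sigma=\varphi^{-1}$, so $\iota'$ conjugates $\varphi$ to $\varphi^{-1}$, and hence so does $\sigma\iota=\varphi\iota'$. Assume now that $\varphi$ is a single $4g$-cycle and relabel $\mathbb Z/4g\mathbb Z$ so that $\varphi$ becomes the rotation $i\mapsto i+1$. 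In these coordinates any permutation conjugating $\varphi$ to $\varphi^{-1}$ is affine of the form $i\mapsto c-i$, so $\iota'$ becomes $i\mapsto a-i$ for some constant $a$ and $\sigma\iota=\varphi\iota'$ becomes $i\mapsto (a+1)-i$. Since $\iota'$ still has a fixed point, the congruence $2i\equiv a$ is solvable modulo $4g$, which forces $a$ to be even; then $2i\equiv a+1$ is unsolvable, so $\sigma\iota$ is fixed-point-free, and by entry~(iii) the diagram has no horizontal chord.

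The step I expect to be the real work is (ii), and more generally making the three conventions mutually consistent: one has to check that the standard identification of face-walks with the cycles of $\sigma\rho$ is set up with the gluing direction oriented compatibly with $\rho$, so that reflective symmetry genuinely becomes commutation with $\iota$ and the type II axis genuinely yields $\iota\colon i\mapsto 1-i$; once $\varphi$, $\iota$ and their relations are pinned down, only the short parity computation above remains. As a sanity check, note that in any type II symmetric diagram the number of horizontal chords equals $2g$ minus twice the number of pairs of chords interchanged by the reflection, hence is even; so Lemma~\ref{lm:1} is precisely the statement that this a priori possibly-positive even number is forced to vanish as soon as the diagram is maximal.
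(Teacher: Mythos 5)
Your proof is correct, and it takes a genuinely different route from the paper. The paper argues pictorially on the face walk itself: it starts two mirror-image traversals at the fixed arc $(4g)-(1)$, notes that they must close up either on the other fixed arc or on a horizontal chord, and observes that in the latter case the arc $(2g)-(2g+1)$ would never be covered by the unique face walk, a contradiction. You instead encode the diagram as a fixed-point-free involution $\sigma$ on $\mathbb{Z}/4g\mathbb{Z}$ commuting with the reflection $\iota\colon i\mapsto 1-i$, identify face walks with cycles of $\varphi=\sigma\rho$, and run a parity argument: $\iota'=\rho^{-1}\iota$ conjugates $\varphi$ to $\varphi^{-1}$ (indeed $\iota'\varphi\iota'=\rho^{-1}\iota\sigma\iota=\rho^{-1}\sigma=\varphi^{-1}$), so after relabelling so that $\varphi$ is the standard $4g$-cycle, both $\iota'$ and $\sigma\iota=\varphi\iota'$ become maps $i\mapsto c-i$; since $\iota'$ fixes a point its offset is even, hence the offset of $\sigma\iota$ is odd and $\sigma\iota$ is fixed-point-free, which by your dictionary entry (iii) is exactly the absence of horizontal chords. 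I verified the algebra and the dictionary; the only external inputs are the standard facts you flag, and the one you single out as delicate (number of face walks $=$ number of cycles of $\sigma\rho$) is actually convention-robust, since $\sigma\rho$, $\rho\sigma$ and their inverses are conjugate and so have the same cycle count, while the rest of your computation uses only the commutation $\iota\sigma=\sigma\iota$ and the explicit formulas for $\rho$ and $\iota$. What your route buys is a fully algebraic, picture-free proof whose mechanism (a reflection with a fixed point, composed with the generator of the face cycle, must be fixed-point-free) is cleanly reusable; what the paper's route buys is brevity and geometric information about where the unique face walk closes, which the paper then exploits again in the proof of Lemma~\ref{lm:2}.
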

 \begin{proof}
 It is the easiest to prove this fact visually by examining the structure of the face walk of a diagram. Start traversing the face walk from its topmost point, where the face walk goes along the arc $(4g) - (1)$. Since the diagram admits a reflection, if we start traversing the face walk in two opposite directions simultaneously with two `pointers' (Figure \ref{fig:face_walks}(a)), they will be drawing two lines which are mirror images of each other. These two lines must eventually meet, closing the face walk. That will happen as soon as they encounter either the arc $(2g) - (2g+1)$, or a horizontal chord. However, if they meet on a horizontal chord, then the arc $(2g) - (2g+1)$ is not covered by the only face walk, which is impossible. Hence the only face walk never encounters a horizontal chord and there are no horizontal chords in the diagram.
 \end{proof}
 
 \begin{Lm} 
 \label{lm:2}
 Any type I maximal diagram contains exactly one vertical and one horizontal chord. Removing these chords yields a type II maximal diagram. Conversely, inserting a horizontal and a vertical chord into a type II maximal diagram yields a type I maximal diagram.
 \end{Lm}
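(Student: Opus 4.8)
The plan is to handle the three claims in turn by a direct analysis of the face walk, in the spirit of the proof of Lemma~\ref{lm:1}. Throughout, write $\sigma$ for the reflection defining a given type I maximal diagram, so that $\sigma$ fixes the two points $1$ and $2g+1$ and sends $k\mapsto 4g+2-k$ otherwise. For the vertical chord: since $\sigma$ fixes the point $1$, the chord at $1$ must be $\sigma$-invariant, and because $1$ and $2g+1$ are the only points fixed by $\sigma$, the other endpoint of that chord is fixed as well, so this chord is $\{1,2g+1\}$ and it lies on the axis. It is the only vertical chord, since the axis meets the circle in no other point.

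For the horizontal chord I would count the $\sigma$-fixed elements of the (unique) face walk in two ways. Parametrize the walk cyclically as $w_0,w_1,\dots,w_{8g-1}$, with arcs and chord sides alternating. Since $\sigma$ is an automorphism of the diagram and the walk is unique, $\sigma$ carries the walk to itself with reversed orientation, i.e.\ $\sigma(w_i)=w_{c-i}$ for a constant $c$; matching this with the alternation of arcs and chord sides forces $c$ to be even, so the involution $i\mapsto c-i$ of $\mathbb{Z}/8g$ has exactly two fixed points, and hence the walk has exactly two $\sigma$-fixed elements. On the other hand $\sigma$ fixes no arc (the axis meets the circle only in points), swaps the two sides of the vertical chord, sends every chord that is neither vertical nor horizontal to a different chord, and fixes each of the two sides of a horizontal (i.e.\ perpendicular to the axis) chord. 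Hence the two $\sigma$-fixed elements of the walk are precisely the two sides of one horizontal chord, which yields existence and uniqueness at once.

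For the removal and its converse, first note that deleting the vertical and horizontal chords removes the four points $1,2g+1,k,4g+2-k$; the axis is still a symmetry and now runs through the centres of the two arcs that replace $1$ and $2g+1$ and through no point, so the result is a type II diagram with $2g-2=2(g-1)$ chords. To see it is maximal I would use the surgery rule: deleting a chord whose two sides lie in one face walk splits that walk into two, while deleting a chord whose two sides lie in different walks merges them, so a deletion always changes the number of face walks by exactly one. Delete the horizontal chord $h$ first; its two sides occupy exactly the two $\sigma$-fixed positions of the cyclic parametrization, so the walk breaks into two pieces, one for each of the two complementary arcs of positions cut off by those positions, and $\sigma$ (reflecting about those positions) interchanges the two pieces. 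The two sides of the vertical chord, being swapped by $\sigma$, then lie one in each of these two walks, so deleting it merges them and leaves a single face walk — a maximal type II diagram of genus $g-1$. Reversing this yields the converse: in a type II maximal diagram, insert the vertical chord along the axis (new points at the centres of the two axial arcs), which creates two face walks interchanged by $\sigma$ for the same reason; then insert a chord perpendicular to the axis joining a point of one walk to its mirror image in the other, which straddles and merges the two walks into one. The result is symmetric about the axis through the two new points, has $2g$ chords and a single face walk, hence is a type I maximal diagram; together the two directions set up a bijection.

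The only place requiring real care is the walk bookkeeping in the third step: the surgery rule itself, and above all the claims that deleting (resp.\ inserting) the axial chord interchanges the two walks while the perpendicular chord straddles them. Both reduce to the single structural fact contrasted above — that $\sigma$ swaps the two sides of a chord lying on the axis but fixes each side of a chord perpendicular to it — after which locating the relevant chord sides among the positions $0,\dots,8g-1$, where $\sigma$ acts as $i\mapsto c-i$ with fixed positions $c/2$ and $c/2+4g$, makes the count of face walks routine.
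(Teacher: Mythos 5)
Your proof is correct and takes essentially the same route as the paper: both arguments exploit the reflection symmetry of the unique face walk together with the split/merge effect of inserting or deleting a chord. Your count of fixed points of the orientation-reversing involution $i\mapsto c-i$ on the walk is a cleaner formalization of the paper's informal ``two pointers meeting'' traversal argument for existence and uniqueness of the horizontal chord, and you additionally spell out the deletion direction, which the paper leaves implicit as the inverse of the insertion argument.
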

  \begin{proof}
 The existence of the vertical chord $(1)-(2g+1)$ is obvious. To prove that there is only one horizontal chord, we use the same visual argument as above. Start traversing the face walk from the arcs $(4g) - (1)$ and $(1) - (2)$ in two opposite directions (Figure \ref{fig:face_walks}(b)). The two lines being drawn by the traversal must eventually meet at a horizontal chord (Figure \ref{fig:face_walks}(c)). Now traverse the rest of the face walk starting from the initial points in the reverse direction (Figure \ref{fig:face_walks}(d)). The face walk will eventually close at some side of a horizontal chord. It must be the opposite side of the same horizontal chord, otherwise the face walk didn't pass through that side at all, contradicting diagram maximality.
 
 \begin{figure}[h]
\centering
\begin{tabular}[t]{cc}
	\begin{subfigure}[b]{0.22\textwidth}
	\centering
    		\includegraphics[scale=1.9]{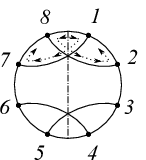}
 	\caption{Type I}
	\end{subfigure}

	\begin{subfigure}[b]{0.22\textwidth}
	\centering
    		\includegraphics[scale=1.9]{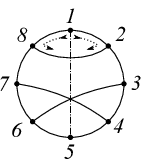}
 	\caption{Type II (step 1)}
	\end{subfigure}

	\begin{subfigure}[b]{0.22\textwidth}
	\centering
    		\includegraphics[scale=1.9]{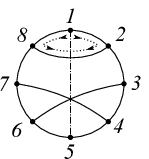}
 	\caption{Type II (step 2)}
	\end{subfigure}

	\begin{subfigure}[b]{0.22\textwidth}
	\centering
    		\includegraphics[scale=1.9]{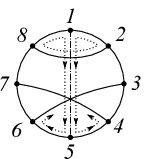}
 	\caption{Type II (step 3)}
	\end{subfigure}
\end{tabular}
\caption{Traversing face walks in maximal diagrams}
\label{fig:face_walks}
\end{figure}

 To prove the third part of the claim one can check that inserting a chord into a diagram in such a way that its two ends fall into the arcs belonging to the same face walk splits this walk into two. If such arcs belong to two different face walks, these face walks will conversely be merged. From the symmetry it follows that inserting a vertical chord into a type II maximal diagram splits its face walk into two walks that are mirror images of each other. Inserting any horizontal chord afterwards joins the arcs from two different face walks, yielding a type I maximal diagram.
  \end{proof} 
 
The Lemma \ref{lm:1} implies that the quotient map corresponding to a type II maximal chord diagram never contains a chord that lies on a boundary. This fact allows to prove the following theorem that reduces our enumeration problem to a problem that is solved in \cite{Ledoux} using the technique of matrix integrals.

 \begin{Th} 
 \label{th}
 There exists a bijection between type II maximal diagrams with $2g$ chords and rooted $g$-edge one-face one-vertex maps on orientable and non-orientable surfaces without a boundary. 
 \end{Th}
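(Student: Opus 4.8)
\emph{Plan.} As recalled in the introduction, a chord diagram with $2g$ chords is the same object as a one-vertex map $M$ whose single fat vertex is the disk bounded by the circle and whose $2g$ edges are the chords, attached in the cyclic order of the $4g$ chord ends; its surface $S$ is orientable of Euler characteristic $2-2g$. Since $M$ has one vertex and $2g$ edges, maximality ($\chi(S)=2-2g$) forces $1-2g+F=2-2g$, i.e. $M$ has one face as well. A type II diagram is then such a map $M$ together with the orientation-reversing involutive automorphism $\alpha$ realising the reflection, with the dart at the point $1$ distinguished. The bijection will send $M$ to the map obtained by taking the quotient of $M$ by $\alpha$ and closing up the single boundary component this produces.

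\emph{Analysis of the quotient.} The fixed-point set $F$ of $\alpha$ on $S$ is a disjoint union of circles. A type II axis passes through two arc midpoints, so $\alpha$ fixes no chord end; hence no chord is $\alpha$-invariant (a horizontal chord is excluded by Lemma \ref{lm:1}, a vertical one cannot occur for type II), and therefore $F$ meets no edge of $M$. On the fat vertex and on the single face $\alpha$ acts as a reflection of a disk, with fixed set a diameter in each case, and these two diameters have the same two endpoints, namely the midpoints of the two $\alpha$-fixed arcs. So $F$ is a single circle meeting $M$ only at those two corner points. Consequently $S/\alpha$ is a (possibly non-orientable) surface with exactly one boundary circle, the quotient map $M/\alpha$ has $g$ edges (the $\alpha$-orbits of chords) and one vertex, and — exactly as noted just after Lemma \ref{lm:1} — nothing of $M/\alpha$ lies on that boundary.

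\emph{Closing up and rooting.} Cap the boundary circle with a disk absorbed into the face. Since the circle carried neither a vertex nor an edge, this changes neither the vertex count nor the edge count, so the resulting closed surface $\bar S$ carries a map $\bar M$ with one vertex and $g$ edges; from $\chi(S)=2\,\chi(S/\alpha)$ (the fixed set being a union of circles) and $\chi(\bar S)=\chi(S/\alpha)+1$ one gets $\chi(\bar S)=2-g$, so Euler's formula forces $\bar M$ to have exactly one face, and $\bar S$ is orientable of genus $g/2$ or non-orientable of genus $g$ according to which realises $2-g$. Rooting $\bar M$ at the dart coming from the point $1$ of the diagram yields a well-defined, size-preserving assignment from type II maximal diagrams with $2g$ chords to rooted one-face one-vertex $g$-edge maps.

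\emph{The inverse and the main obstacle.} To finish I would exhibit the inverse: given a rooted map $\bar M$ on $\bar S$, the root singles out one corner, and following the unique face walk, a second corner; joining them by a diameter of the vertex disk and a diameter of the face disk produces a canonical circle, along which one cuts $\bar S$ open to a bordered surface and glues it to a mirror copy of itself. This yields an orientable surface carrying a one-vertex one-face map with an orientation-reversing involution fixing no chord; reading off the $4g$ chord ends starting from the root recovers a type II maximal diagram. The step I expect to be the actual work is verifying that this doubling is well defined and genuinely inverse to ``quotient then cap'': one must check that the root pins the reinserted circle down uniquely, that no chord of the doubled diagram becomes horizontal or vertical (so the output really is of type II, with the right face count by the reasoning of Lemmata \ref{lm:1} and \ref{lm:2}), and that both round trips are the identity. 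What makes this go through is precisely the structural input already in hand: by Lemma \ref{lm:1} the single boundary component created by the quotient is free of edges and vertices, which is exactly the condition under which capping is reversible.
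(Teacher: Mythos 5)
Your forward construction --- quotient by the reflection, with Lemma \ref{lm:1} and the type II condition guaranteeing that no chord is $\alpha$-invariant, so the fixed set is a single circle, the quotient has one boundary component carrying no edge, then cap and root --- is essentially the paper's, and the Euler-characteristic bookkeeping is correct. One inaccuracy, however, feeds the real problem: the boundary of the quotient is not disjoint from the map; it passes through the unique vertex (the vertex is a fixed point of $\alpha$ and hence lies on the fixed circle), exactly as the paper states. This is harmless for the counts (capping changes neither the vertex nor the edge number), but it shows that your picture of how the cut circle sits inside the capped map $\bar M$ is off.

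The genuine gap is the inverse, which is the substance of the theorem and which you both defer (``the step I expect to be the actual work'') and mis-sketch. After capping, the two points at which the fixed circle met the face walk lie in one and the same corner of $\bar M$: the two half-corners adjacent to the quotient boundary merge into a single corner once the boundary is capped or contracted. Hence the curve to cut along is a contractible loop bounding the cap, attached at the single corner designated by the root; there is no canonical ``second corner'' to be found by ``following the unique face walk''. Your recipe of joining the root corner to a second corner by diameters of the vertex disk and the face disk is therefore not well defined (which of the other $2g-1$ corners?), and for a genuinely different corner it cuts along a circle that bounds no disk, so ``cut open and glue to a mirror copy'' yields Euler characteristic $2(2-g)=4-2g$ instead of the required $2-2g$; even for the correct circle one must discard the cap disk rather than double it, which your sketch never says. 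The paper's inverse avoids all of this and is local: use the root to pick one corner, cut a hole at that corner to create the boundary, read the result as a $(2g+1)$-gon with $2g$ sides glued in pairs and one free side, and double across the free side, each glued pair determining (according to its gluing direction) one of the two symmetric ways to pair the corresponding four sides of the $4g$-gon, i.e.\ a mirror pair of chords. Until an inverse of this kind is written down and both round trips are checked, the bijection --- and with it the identification of $d^{||}_{2g}$ with the rooted map count --- is not established.
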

   \begin{proof}
First we replace the diagrams by their representations as $4g$-gons with edges identified pairwise. Since the diagrams in question are of type II, on a $4g$-gon the axis of symmetry passes through two vertices. By diagram maximality, after gluing the edges there is only one vertex left, hence the axis of symmetry crosses this only vertex, as well as the only face. For a quotient map this means that the surface has a boundary, and this boundary contains the only vertex and serves as an edge of the only face of the map. 

By Lemma 1 there are no horizontal chords in the diagram, hence the obtained surface has no other boundaries. In other words, the quotient map is built from an $(2g+1)$-gon with one edge (originally the axis of the reflection) being the boundary of the surface, and the other edges split into pairs and glued in one of two possible directions, depending on arrow orientation, which in its turn depends on the mutual position of a pair of chord in the chord diagram. Now choose the edge $(1) - (2)$ of the polygon to be the root of the map, contract the boundary into a point (or cap it with a disk) and remove the edge labelling. The resulting map is a rooted one-face one-vertex map with $g$ edges on an orientable or a non-orientable surface. 

This transformation is clearly invertible and its inverse can be applied to any one-face one-vertex rooted map as follows. Use the root to locate the `corner' to cut, and cut a hole in it to create a boundary. The obtained map on a surface with a boundary can be viewed as a $(2g+1)$-gon which has $2g$ sides glued pairwise in one or another direction and one more side which will become the axis of symmetry of the chord diagram. This $(2g+1)$-gon depicts one half of a symmetric $4g$-gon with its sides identified pairwise and a fixed reflection axis. Each pair of identified sides in the $(2g+1)$-gon stands for four sides of the $4g$-gon. These four sides are split into pairs in one of two possible symmetric ways depending on the direction of gluing of the sides of $(2g+1)$-gon. The chord diagram can be restored from this $4g$-gon in a unique way: each each pair of identified sides becomes a chord.
\end{proof}

 \section*{Enumerating formula}
 
By Theorem \ref{th} the number $d^{||}_{2g}$ is equal to the number of rooted maps with one face, one vertex and $g$ edges on both orientable and non-orientable surfaces. The recursion formula for this sequence can be obtained by substituting $k=1$ into \cite[Corollary 7]{Ledoux}:
$$
d^{||}_{2g} = \frac{1}{g+1} \Biggl( -(4g-1) d^{||}_{2g-2} + g(2g-3)(10g-9)d^{||}_{2g-4} + 30\BCf{2g-3}{3}d^{||}_{2g-6} - 240\BCf{2g-3}{5} d^{||}_{2g-8} \Biggr ).
$$
For this special case the only necessary initial conditions are $d^{||}_{2g} = 0$ for $g < 0$; $d^{||}_{0} = d^{||}_{2} = 1$. From Lemma \ref{lm:2} it follows that
$$
d^{|}_{2g} = (2g - 1) \, d^{||}_{2g-2},
$$
where the multiplier $(2g - 1)$ counts the number of ways to insert a horizontal chord into a type II diagram. By substituting these expressions and the expression for $d^*_{2g}$ \cite[Proposition 6.3]{Cori_Marcus}

$$
d^*_{2g} = \frac{1}{4g}\Bigl[ \frac{(4g)!}{4^g(2g+1)!} +\sum_{qk=4g, \, 2 \mid q}\varphi(q)\sum_{\gamma=0}^{k/4}\BCf{k}{4\gamma} \frac{(4\gamma)!}{4^\gamma(2\gamma+1)!}q^{2\gamma} + \sum_{qk=4g, \, 2 \nmid q}\varphi(q)q^{k/2} \frac{k!}{2^{k/2}(k/2+1)!}\Bigr]
$$
into the formula \eqref{final} the author computed the numbers $d^\circ_{2g}$ of non-isomorphic genus $g$ maximal chord diagrams (Table \ref{table}).

\begin{table}[h!]
\footnotesize
\centering
\begin{tabular}{r|r|r|r|r}
\midrule
$g$  & $d^*_{2g}$ & $d^|_{2g}$ & $d^{||}_{2g}$ & $d^\circ_{2g}$ \\ 
\midrule
1 & 1 & 1 & 1 & 1 \\
2 & 4 & 3 & 5 & 4 \\
3 & 131 & 25 & 41 & 82 \\
4 & 14118 & 287 & 509 & 7258 \\
5 & 2976853 & 4581 & 8229 & 1491629 \\
6 & 1013582110 & 90519 & 166377 & 506855279 \\
7 & 508233789579 & 2162901 & 4016613 & 254118439668 \\
8 & 352755124921122 & 60249195 & 113044185 & 176377605783906 \\
9 & 324039613564554401 & 1921751145 & 3630535785 & 162019808170348933 \\
10 & 380751174738424280720 & 68980179915 & 131095612845 & 190375587419231088550 \\
11 & 557175918657122229139987 & 2753007869745 & 5256401729985 & 278587959330563466969926 \\
12 & 993806827312044893602464496 & 120897239789655 & 231748716159765 & 496903413656110608290219603 \\
\midrule
\end{tabular}
\caption{The numbers of maximal chord diagrams by genus}
\label{table}
\end{table}

 \section*{Conclusion}

It turns out that maximal diagrams possess a structure which allows to derive simple formulas for enumerating them in labelled and unlabelled cases. Both the present result and the result of Cori and Marcus rely on some fact that a  quotient is maximal if and only if the original diagram is maximal. If we take one step further and try to enumerate unlabelled genus $g$ chord diagrams with a fixed number of faces greater than one, the connection between the numbers of faces on a diagram and its quotient will become much more complicated. At the moment the author is not aware of a way to obtain a formula for this more general problem.

\section*{References}

\bibliography{bib}

\end{document}